\newtheorem{theorem}{Theorem}
\newtheorem{lemma}{Lemma}
\theoremstyle{definition}
\newtheorem{remark}{Remark}
\newcommand{\sigmaopt}{}
\newcommand{\sigmaoptk}{}
\begin{document}

\title[]{Convergence of an Adaptive Approximation Scheme for the Wiener Process}

\author[M. Brod\'en]{Mats Brod\'en}
\address{Centre for Mathematical Sciences\\ Lund University\\ Box 118\\ 221 00 Lund, Sweden}
\email{matsb@maths.lth.se}

\author[M. Wiktorsson]{Magnus Wiktorsson}
\address{Centre for Mathematical Sciences\\ Lund University\\ Box 118\\ 221 00 Lund, Sweden}
\email{magnusw@maths.lth.se}

\keywords{Discretization error, convergence in distribution, triangular distribution}

\subjclass[2000]{60F05, 60G15}

\begin{abstract}
The problem of approximating/tracking the value of a Wiener process is considered. The discretization points are placed at times when the value of the process differs from the approximation by some amount, here denoted by $\eta$. It is found that the limiting difference, as $\eta$ goes to $0$, between the approximation and the value of the process normalized with $\eta$ converges in distribution to a triangularly distributed random variable.
\end{abstract}

\maketitle

\section{Introduction and preliminaries}
An adaptive approximation scheme of the Wiener process is considered. The discretization points are placed at times when the value of the true process differs from the approximation by some amount, here denoted by $\eta$. This can be seen as a control problem where we want to track the true value of the process with our approximation, and where both the process and its approximation are fully observable. The approximation strategy presented here may be feasible when discretization is associated with some cost that should be kept low. Examples of related problems is that of discrete time hedging of derivative contracts in financial markets (see e.g. \citet{Geiss_Geiss:2006}) and certain space-time discretization schemes of stochastic differential equations (see e.g. \citet{Milstein_Tretyakov:1999}).

Let $X^{\sigmaopt}$ be a diffusion process defined by $X^{\sigmaopt}_t=\sigma W_t$, where $W$ denotes a one dimensional standard Wiener process. Define, for some $\eta>0$, a sequence of stopping times $\{t^{\sigmaoptk \eta}_{i} \}_{i \geq 0}$ by
\begin{align*}
t^{\sigmaoptk \eta}_{i+1} = \inf \{t>t^{\sigmaoptk \eta}_{i} \, | \, |X^{\sigmaopt}_t-X^{\sigmaopt}_{t^{\sigmaoptk \eta}_{i}}|=\eta \} \, ,
\end{align*}
where $t^{\sigmaoptk \eta}_{0}=0$. The components of the sequence $t^{\sigmaoptk \eta}$ may be seen as epochs of the renewal process $N^{\sigmaoptk \eta}$ defined by $N^{\sigmaoptk \eta}_t=\sup\{i: t^{\sigmaoptk \eta}_{i} \leq t \}$. Furthermore, let the sequence $\{\tau^{\sigmaoptk \eta}_{i} \}_{i \geq 1}$ of interarrival times be defined by $\tau_i^{\sigmaoptk \eta}=t_{i}^{\sigmaoptk \eta}-t_{i-1}^{\sigmaoptk \eta}$, and define the renewal-reward process $\varphi$ by $\varphi^{\sigmaoptk \eta}_t:=\sum_{i=1}^{N^{\sigmaoptk \eta}_t} \tau_i^{\sigmaoptk \eta}$. The process $X^{\sigmaopt}_{\varphi^{\sigmaoptk \eta}_t}$ may also be seen as a renewal-reward process, but with a reward that takes the values $-\eta$ and $\eta$ with equal probability.

The aim of this work is to investigate the asymptotic behavior of $(X^{\sigmaopt}_t-X^{\sigmaopt}_{\varphi^{\sigmaoptk \eta}_t})/\eta$ as $\eta$ approaches $0$. It will be seen that this quantity converges, pointwise for each $t>0$, in distribution to a stochastic variable which is triagularly distributed.

Before we end this section we will state some resluts regarding barrier crossings and renewal processes. The main result is presented in Section~\ref{sec:main_res}. In Section~\ref{sec:num_res} we perform a simulation study and investigate the transition to the limiting distribution.

\subsection{The Wiener process with two absorbing barriers}
Since the components of the sequence $\{\tau^{\sigmaoptk \eta}_{i} \}_{i \geq 1}$ are independent and identically distributed, we will let $\tau^{\sigmaoptk \eta}$ denote a stochastic variable with the same properties as these $\tau^{\sigmaoptk \eta}_{i}$'s, and which may be characterized by
\begin{align*}
\tau^{\sigmaoptk \eta} = \inf \{ t>0 \, | \, |X^{\sigmaopt}_t| = \eta \} \, . 
\end{align*}

Now, consider the process $X^{\sigmaopt}$ absorbed in $-\eta$ and $\eta$, that is $X^{\sigmaopt}_{t \wedge \tau}$. The transition density of this process, from $X_0=0$, may be represented by (see \citet{Cox_Miller:1965})
\begin{align}
p^{\sigmaoptk \eta}(t,x) = \sum_{k=1}^{\infty} \frac{1}{\eta^2} e^{-\frac{1}{2} \left( \frac{k \sigma \pi}{2 \eta} \right)^2 t } \sin \left( \frac{k \pi}{2} \right) \sin \left( \frac{k \pi (x+\eta)}{2 \eta} \right) , \label{eqn:pfct}
\end{align}
for all $(t,x) \in (0,\infty) \times [-\eta,\eta]$. This transition density may also be expressed as an infinite sum over Gaussian kernels (see \citet{Cox_Miller:1965})
\begin{align}
p^{\sigmaoptk \eta}(t,x) = \sum_{k=-\infty}^{\infty} \frac{1}{\sqrt{2 \pi \sigma^2 t}} \left(e^{-\frac{(x-4 k \eta)^2}{2 \sigma^2 t}}-e^{-\frac{(x-2\eta+4 k \eta)^2}{2 \sigma^2 t}} \right) . \label{eqn:pfctG}
\end{align}
for all $(t,x) \in [0,\infty) \times [-\eta,\eta]$.

\begin{lemma} \label{lem:ptrip}
The integral of $p^{1}(t,x)$ 
\begin{enumerate}
\item[a)] with respect to $t$ over the interval $[a,b] \subset [0,\infty)$ may be represented as
\begin{align*}
\int_a^b p^{\sigmaoptk \eta}(t,x) \text{d} t = \sum_{k=1}^{\infty} \int_a^b  \frac{1}{\eta^2} e^{-\frac{1}{2} \left( \frac{k \sigma \pi}{2 \eta} \right)^2 t } \sin \left( \frac{k \pi}{2} \right) \sin \left( \frac{k \pi (x+\eta)}{2 \eta} \right) \text{d} t ,
\end{align*}
for all $x \in [-\eta,\eta]$.
\item[b)] with respect to $x$ over the interval $[a,b] \subset [-\eta,\eta]$ may be represented as
\begin{align}
\int_a^b p^{\sigmaoptk \eta}(t,x) \text{d} x = \sum_{k=-\infty}^{\infty} \int_a^b \frac{1}{\eta^2} e^{-\frac{1}{2} \left( \frac{k \sigma \pi}{2 \eta} \right)^2 t } \sin \left( \frac{k \pi}{2} \right) \sin \left( \frac{k \pi (x+\eta)}{2 \eta} \right) \text{d} x  , \label{eqn:repF}
\end{align}
for all $t \in (0,\infty)$, or as
\begin{align}
\int_a^b p^{\sigmaoptk \eta}(t,x) \text{d} x = \sum_{k=-\infty}^{\infty} \int_a^b \frac{1}{\sqrt{2 \pi \sigma^2 t}} \left(e^{-\frac{(x-4 k \eta)^2}{2 \sigma^2 t}}-e^{-\frac{(x-2+4 k \eta)^2}{2 \sigma^2 t}} \right) \text{d} x  , \label{eqn:repG}
\end{align}
for all $t \in [0,\infty)$.
\end{enumerate}
\end{lemma}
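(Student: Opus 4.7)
The plan is to justify the interchange of summation and integration in each of the three displayed identities by means of the Fubini--Tonelli theorem. In every case I would bound the absolute value of the $k$-th summand over the domain of integration and show that the resulting bound is summable in $k$.

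For part a), set $c = \sigma^{2}\pi^{2}/(8\eta^{2})$, so that the $k$-th summand in the sine series is bounded in absolute value by $\eta^{-2}e^{-ck^{2}t}$, uniformly in $x\in[-\eta,\eta]$. Integrating this bound over $t\in[a,b]$ yields at most $\eta^{-2}(ck^{2})^{-1}(1-e^{-ck^{2}b})$, which is $O(k^{-2})$ and hence summable. Fubini--Tonelli then gives the identity. Notably this argument covers the case $a=0$ as well, even though the representation \eqref{eqn:pfct} fails to converge absolutely as $t\downarrow 0$: the $t$-integration produces the extra $k^{-2}$ factor that restores summability.

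For part b), the argument for \eqref{eqn:repF} is essentially the same. For every fixed $t>0$ the summand is bounded in absolute value by $\eta^{-2}e^{-ck^{2}t}$, which is already geometrically summable in $k$, so integration in $x$ over $[a,b]\subset[-\eta,\eta]$ followed by Fubini--Tonelli yields the identity. For \eqref{eqn:repG}, I would bound the absolute integral of the $k$-th Gaussian-kernel pair over $[a,b]$ by the probability that a Gaussian random variable with mean $4k\eta$ (respectively $2\eta-4k\eta$) and variance $\sigma^{2}t$ falls in $[a,b]\subset[-\eta,\eta]$. Since both means drift linearly in $|k|$ away from the bounded target interval, these probabilities decay like $\exp(-Ck^{2})$ for large $|k|$, so the series of absolute integrals converges and Fubini--Tonelli again applies.

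The only delicate point is the borderline case $a=0$ in part a): the series \eqref{eqn:pfct} itself is not absolutely convergent as $t\downarrow 0$, so one cannot simply quote uniform convergence on $[a,b]$. The resolution, as indicated above, is that $t$-integration contributes the missing $k^{-2}$ factor; once this is observed, the remainder of the argument is a routine application of Fubini--Tonelli, and no additional analytic machinery beyond term-by-term bounds on the absorbing-barrier kernel is needed.
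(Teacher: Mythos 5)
Your proof is correct, and it reaches all three interchanges by a genuinely different route from the paper: you use one tool throughout, Fubini--Tonelli with explicit absolute-summability bounds, whereas the paper works with convergence theorems applied to the partial sums of the series. For part a), the paper first applies monotone convergence at the single point $x=0$, where every term of the sine series is nonnegative, computes the termwise integral over $[0,\infty)$ to obtain the summable bound $8/(k^{2}\pi^{2}\sigma^{2})$ (exactly your $k^{-2}$ gain from $t$-integration), and then invokes dominated convergence for general $x$, using $p^{\eta}(\cdot,0)$ as the dominating function via the pointwise bound that the $k$-th term at $x$ is dominated in absolute value by the $k$-th term at $0$. For \eqref{eqn:repF} the paper uses the bounded convergence theorem: for fixed $t>0$ the partial sums are uniformly bounded by $p^{\eta}(t,0)$, and $[a,b]\subset[-\eta,\eta]$ is bounded. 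The most substantive difference is in \eqref{eqn:repG}: you prove absolute summability via Gaussian tail estimates, while the paper avoids estimates entirely by observing that on $[-\eta,\eta]$ each Gaussian difference with index $k\ge 0$ is nonnegative and each with index $k\le -1$ is nonpositive, splitting the symmetric partial sums into these two halves, and applying monotone convergence to each half separately. Your tail-bound argument is quantitative and indifferent to the sign structure of the terms, so it is the more robust of the two; the paper's sign-splitting is softer (no estimates at all) but leans on the specific geometry of the centers $4k\eta$ and $2\eta-4k\eta$ relative to the interval $[-\eta,\eta]$. Both proofs, as you correctly note for the borderline case $a=0$, ultimately rest on the same observation that integration in $t$ restores summability, and both implicitly use that the representation \eqref{eqn:pfct} holds pointwise for every $t>0$, so that the single point $t=0$ is negligible under the integral.
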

\begin{proof} \textit{a)} Define the functions $g^F_k$ and $G^F_n$by 
\begin{align*}
g^F_k(t,x) =  \frac{e^{-\frac{1}{2} \left( \frac{k \sigma \pi}{2 \eta} \right)^2 t }}{\eta^2} \sin \left( \frac{k \pi}{2} \right) \sin \left( \frac{k \pi (x+\eta)}{2 \eta} \right),
\end{align*}
and $G^F_n(t,x)=\sum_{k=1}^n g^F_k(t,x)$ then $\lim_{n \uparrow \infty}G^F_n(t,x)=p^{\eta}(t,x)$. Since $g_k^F(t,0) \geq 0$ it follows that
$$
0 \leq G_{n}^F(t,0) \leq G_{n+1}^F(t,0),
$$
and consequently by Lebesgues monotone convergence thorem
$$
\int_a^b \lim_{n \uparrow \infty} G_n(t,0) \text{d} t = \lim_{n \uparrow \infty} \int_a^b  G_n(t,0) \text{d} t.
$$
Extending the integral we get
$$
\lim_{n \uparrow \infty} \int_a^b  G_n(t,0) \text{d} t \leq \lim_{n \uparrow \infty} \int_0^\infty  G_n(t,0) \text{d} t.
$$
Moving the integral inside of the sum in $G_n(t,0)$ and performing the integration over $\mathbb{R}_+$ we get the sum $\lim_{n \uparrow \infty} \sum_{k=1}^n 8/(k^2 \pi^2 \sigma^2)=4/(3 \sigma^2)$, and hence $\lim_{n \uparrow \infty} \int_0^\infty  G_n(t,0) \text{d} t < \infty$. Since $|\sin(k \pi/2) \sin(k \pi (x+\eta)/(2 \eta))| \leq 1$  it holds that $|g_k^F(t,x)| \leq g_k^F(t,0)$ which implies that $|G_n^F(t,x)| \leq G_n^F(t,0)$. Since $G_n^F(t,0)$ is bounded by $\lim_{n \uparrow \infty} G_n^F(t,0)$ the function $G_n^F(t,x)$ is dominated by the integrable function $\lim_{n \uparrow \infty}$ $G_n^F(t,0)$ and by the dominated convergence theorem it follows that
$$
\int_a^b \lim_{n \uparrow \infty} G_n(t,x) \text{d} t = \lim_{n \uparrow \infty} \int_a^b  G_n(t,x) \text{d} t.
$$
Moving the integral inside of the sum on the right hand side the claim is proved.

\textit{b)} \textit{Eqn. \eqref{eqn:repF}} From the proof of \textit{a)} we know that $G_n^F(t,x) \leq p^\eta(t,0) = \lim_{n \uparrow \infty}$ $G_n(t,0)$ which is bounded for every $t>0$. Since the set $[a,b]$ is bounded (i.e. $[a,b] \subset [-\eta,\eta]$), the claim now follows from the bounded convergence thorem.

\textit{Eqn. \eqref{eqn:repG}} Define the functions $g_k^G$ and $G_n^G$ by
\begin{align*}
g_k^G(t,x) = \frac{e^{-\frac{(x-k)^2}{2 \sigma^2 t}}}{\sqrt{2 \pi \sigma^2 t}} && \text{and} && G^G_n(t,x)=\sum_{k=-n}^n (g^G_{4k\eta}(t,x)-g^G_{2-4k\eta}(t,x)),
\end{align*}
then $\lim_{n \uparrow \infty} G^G_n(t,x)=$ $p^\eta(t,x)$. The function $G_n^G$ may be decomposed as 
$$
G_n^G(t,x)=G_n^{G,1}(t,x)+G_n^{G,2}(t,x),
$$
where
$$
G_n^{G,1}(t,x)=\sum_{k=0}^n (g^G_{4k\eta}(t,x)-g^G_{4k\eta+2}(t,x))
$$
and
$$
G_n^{G,2}(t,x)=\sum_{k=-1}^{-n}(g^G_{4k\eta}(t,x)-g^G_{4k\eta+2}(t,x)). 
$$
Since each term in $G_n^{G,1}$ is positive and each term in $G_n^{G,2}$ is negative it holds that
\begin{align*}
0 \leq G_n^{G,1}(t,x) \leq G_{n+1}^{G,1}(t,x) && \text{and} && 0 \geq G_{n}^{G,2}(t,x) \geq G_{n+1}^{G,2}(t,x).
\end{align*}
The claim now follows by Lebesgues monotone convergence theorem.
\end{proof}

\begin{lemma} \label{lem:ptri}
It holds that
\begin{equation*}
\sigma^2 \int_0^{\infty} p^{\sigmaoptk  1}(t,x) dt=(1-|x|)^+ \, .
\end{equation*}
\end{lemma}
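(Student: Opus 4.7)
My plan is to use representation (a) of Lemma \ref{lem:ptrip} (with $\eta=1$) to integrate $p^1(t,x)$ in $t$ term-by-term, and then to recognise the resulting series as the Fourier cosine expansion of the tent function $(1-|x|)^+$ on $[-1,1]$.

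First I would extend the interchange of sum and integral from the bounded intervals $[a,b]\subset[0,\infty)$ covered by Lemma~\ref{lem:ptrip}(a) to the improper integral on $[0,\infty)$. Since $p^1(t,x)\geq 0$, monotone convergence gives $\int_0^\infty p^1(t,x)\,\mathrm{d}t=\lim_{b\uparrow\infty}\int_0^b p^1(t,x)\,\mathrm{d}t$, and the finite-$b$ interchange, together with an explicit evaluation of $\int_0^b e^{-(k\sigma\pi/2)^2 t/2}\,\mathrm{d}t$, produces
\[
\int_0^b p^1(t,x)\,\mathrm{d}t \;=\; \sum_{k=1}^\infty \sin\!\tfrac{k\pi}{2}\sin\!\tfrac{k\pi(x+1)}{2}\,\frac{8}{k^2\sigma^2\pi^2}\bigl(1-e^{-k^2\sigma^2\pi^2 b/8}\bigr).
\]
Each summand is bounded in absolute value by the summable $8/(k^2\sigma^2\pi^2)$, so dominated convergence lets me take $b\uparrow\infty$ inside the sum, yielding
\[
\sigma^2 \int_0^\infty p^1(t,x)\,\mathrm{d}t \;=\; \sum_{k=1}^\infty \frac{8}{k^2\pi^2}\sin\!\tfrac{k\pi}{2}\sin\!\tfrac{k\pi(x+1)}{2}.
\]

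Next I would simplify the trigonometry. Since $\sin(k\pi/2)=0$ for even $k$, only $k=2m+1$ survives, with $\sin((2m+1)\pi/2)=(-1)^m$. Expanding $\sin((2m+1)\pi(x+1)/2)$ by the angle-addition formula, the $\sin\cdot\cos$ piece vanishes and the remaining $\cos\cdot\sin$ piece equals $(-1)^m\cos((2m+1)\pi x/2)$. The two factors of $(-1)^m$ cancel and I obtain, for $x\in[-1,1]$,
\[
\sigma^2 \int_0^\infty p^1(t,x)\,\mathrm{d}t \;=\; \sum_{m=0}^\infty \frac{8}{(2m+1)^2\pi^2}\cos\!\left(\frac{(2m+1)\pi x}{2}\right).
\]

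Finally, I would identify this sum with $(1-|x|)^+$. Outside $[-1,1]$ both sides vanish: the right-hand side by construction, the left-hand side because the process is absorbed at $\pm 1$, so $p^1(t,x)=0$ there. Both sides are even in $x$, so it suffices to check the identity on $[0,1]$. The cosines $\{\cos((2m+1)\pi x/2)\}_{m\geq 0}$ form an orthogonal system on $[0,1]$ with squared $L^2$-norm $1/2$, and a short integration by parts gives $\int_0^1 (1-x)\cos((2m+1)\pi x/2)\,\mathrm{d}x = 4/((2m+1)^2\pi^2)$; doubling to account for the normalisation recovers exactly the coefficient $8/((2m+1)^2\pi^2)$ above, and uniform convergence of the cosine series of the Lipschitz function $1-x$ on $[0,1]$ closes the identification. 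I expect the only delicate step to be the $b\uparrow\infty$ interchange in the first step; the trigonometric simplification and the Fourier identification are then routine.
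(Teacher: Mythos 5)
Your proof is correct, and it reaches the paper's series identity by a genuinely different identification argument. Both you and the paper start identically: Lemma~\ref{lem:ptrip}~a) justifies term-by-term integration in $t$, and evaluating $\int_0^\infty e^{-k^2\sigma^2\pi^2 t/8}\,\text{d}t$ produces the series $\sum_{k\geq 1}\frac{8}{k^2\pi^2}\sin(k\pi/2)\sin(k\pi(x+1)/2)$. From there the routes diverge. The paper sums this series via Parseval's formula applied to two \emph{step} functions $h_{s_1},h_{s_2}$ with $s_1=1/2$, $s_2=(x+1)/2$, whose cosine coefficients are $-2\sin(\pi k s)/(\pi k)$; this yields the general identity $\sum_k \frac{2}{\pi^2k^2}\sin(\pi k s_1)\sin(\pi k s_2)=s_1(1-s_2)$ in one stroke, with no need to discuss pointwise convergence of any Fourier series (Parseval is an $L^2$ statement). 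You instead exploit the special value $s_1=1/2$ to collapse the sine product trigonometrically to $\cos((2m+1)\pi x/2)$, and then recognise the classical triangle-wave expansion: the tent function expanded in the mixed Neumann--Dirichlet cosine system on $[0,1]$, with coefficients $8/((2m+1)^2\pi^2)$ checked by integration by parts. This is more concrete but requires one extra ingredient the paper avoids: you must know the series converges \emph{to} $1-x$ pointwise, which you get from uniform convergence (Weierstrass M-test on the absolutely summable coefficients) combined with completeness of that Sturm--Liouville system in $L^2[0,1]$ --- invoking Lipschitz regularity alone is slightly loose, but the gap is standard to fill. A genuine strength of your write-up is the passage from $\int_0^b$ to $\int_0^\infty$: Lemma~\ref{lem:ptrip}~a) only covers bounded intervals $[a,b]$, and your monotone-convergence-in-$b$ plus dominated-convergence-over-$k$ argument supplies a justification that the paper's own proof silently skips.
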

\begin{proof} From Lemma \ref{lem:ptrip} \textit{a)} we have that
\begin{equation*}
\int_0^{\infty} p^{\sigmaoptk 1}(t,x) \text{d}t =  \frac{8}{\pi^2 \sigma^2} \sum_{k=1}^{\infty} \frac{1}{k} \sin \left( \frac{k \pi}{2} \right) \frac{1}{k} \sin \left( \frac{ k \pi (x+1)}{2} \right) \, .
\end{equation*}
The idea is to find a function that can be expressed as a series which corresponds to the above sum. Let $s_1=1/2$ and $s_2=(x+1)/2$, then
\begin{equation*}
\frac{\pi^2 \sigma^2}{8} \int_0^{\infty} p^{\sigmaoptk 1}(t,x) \text{d}t = \sum_{k=1}^{\infty} \frac{1}{k} \sin \left( k \pi s_1 \right) \frac{1}{k} \sin \left( k \pi s_2 \right) \, .
\end{equation*}
Define the function $h_s$ by
\begin{equation*}
h_s(x) = \left\{
\begin{array}{ll}
0 \, , & 0 \leq |x| \leq s \, , \\
1 \, , & s < |x| \leq 1  \, .
\end{array} \right.
\end{equation*}
The Fourier Cosine coefficients of $h_s$ are given by
\begin{equation*}
\begin{split}
c_0 & = \int_0^1 h_s(x) \text{d}x = 1-s \, , \\
a_k & = 2 \int_0^1 \cos(k \pi x) h_s(x) \text{d}x = -\frac{2 \sin(\pi k s)}{\pi k} \, .
\end{split} 
\end{equation*}
Applying Parseval's formula yields
\begin{equation*}
\int_0^1 h_{s_1}(x) h_{s_2}(x) \text{d}x = 2 \sum_{k=1}^{\infty} \frac{\sin(\pi k s_1)}{\pi k} \frac{\sin(\pi k s_2)}{\pi k} + (1-s_1)(1-s_2) \, .
\end{equation*}
Assume that $x \in [0,1]$, then $0 \leq s_1 \leq s_2 \leq 1$ and
\begin{align*}
\sum_{k=1}^{\infty} \frac{2}{\pi^2 k^2} \sin(\pi k s_1) \sin(\pi k s_2) &  = 1-s_2-(1-s_1)(1-s_2) \\
& = s_1(1-s_2) \, .
\end{align*}
Thus
\begin{align*}
\sigma^2 \int_0^{\infty} p^{\sigmaoptk 1}(t,x) \text{d}t & = 4  \sum_{k=1}^{\infty} \frac{2}{k^2 \pi^2} \sin \left( \frac{k \pi}{2} \right) \sin \left( \frac{k \pi (x+1)}{2} \right) \\
& =  (1-x) \, .
\end{align*}
Repeating the argument with $x \in [-1,0]$ yields the result.
\end{proof}

One important property in the theory of renewal processes is that of direct Riemann integrability of a function. A function function $H(\cdot)$ is said to be \textit{directly Riemann integrable} over $[0,\infty)$ if for any $h>0$, the normalized sums
\begin{align*}
h \sum_{n=1}^\infty \inf_{0 \leq \delta \leq h} H(nh-\delta) && \text{and} && h \sum_{n=1}^\infty \sup_{0 \leq \delta \leq h} H(nh-\delta),
\end{align*}
converge to a common finite limit as $h \downarrow 0$ (see chapter 4.4 in \citet{Daley_VereJones:1988}).

\begin{lemma} \label{lem:pdri}
The function $p^1(t,x)$ is directly Riemann integrable with respect to $t$ for each $x \in [-1,1]$.
\end{lemma}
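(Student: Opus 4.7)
The plan is to verify direct Riemann integrability via the standard sufficient condition: a nonnegative function that is continuous on $(0,\infty)$ and dominated by a Lebesgue-integrable, eventually monotone function is directly Riemann integrable. I would split the argument into three regimes of $t$: a compact intermediate range where continuity and boundedness suffice, a tail where exponential decay is exploited, and a neighborhood of the origin where care is required when $x=0$.

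First, continuity of $t \mapsto p^{1}(t,x)$ on $(0,\infty)$ follows from the spectral representation in Lemma~\ref{lem:ptrip}(a): on any slab $t\ge a>0$ the $k$-th term is bounded by $e^{-k^{2}\sigma^{2}\pi^{2}a/8}$, so the series converges uniformly and the limit is continuous. For the tail $t\to\infty$, I would apply the bound $|\sin(k\pi/2)\sin(k\pi(x+1)/2)|\le 1$ in the same representation to get
\begin{equation*}
p^{1}(t,x)\;\le\;\sum_{k=1}^{\infty}e^{-k^{2}\sigma^{2}\pi^{2}t/8}\;\le\;C\,e^{-\sigma^{2}\pi^{2}t/8}\qquad(t\ge t_{0}),
\end{equation*}
which is dominated by a decreasing integrable function.

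For small $t$ I would switch to the Gaussian representation \eqref{eqn:pfctG}. The kernels $(2\pi\sigma^{2}t)^{-1/2}\exp\!\big(-(x-4k)^{2}/(2\sigma^{2}t)\big)$ with $k\ne 0$, and likewise the shifted kernels, all have exponents of order $c/t$ with $c>0$ (because the distance from $x\in[-1,1]$ to $4k$ or $2-4k$ is bounded away from zero for $k\ne 0$), so they tend to $0$ as $t\to 0^{+}$ and are uniformly bounded on $(0,t_{0}]$. For $x\ne 0$ the same reasoning applies to the central term and gives a bound $p^{1}(t,x)\le M$ on $(0,t_{0}]$. For $x=0$ the central term is $(2\pi\sigma^{2}t)^{-1/2}$, which is the only unbounded contribution; this is integrable on $(0,t_{0}]$, and together with the bounds above produces a dominating $g(t)$ that is integrable on $(0,\infty)$ and can be arranged to be non-increasing.

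Combining the three steps, $p^{1}(\cdot,x)$ is continuous on $(0,\infty)$ and dominated by such a $g$, so it is directly Riemann integrable in the sense defined above. The main obstacle I expect is the singular behavior at $t=0$ for $x=0$: here I must check that the integrable $t^{-1/2}$ singularity does not spoil the convergence of the upper and lower sums, which is handled by showing that the contribution of the initial block $[0,h]$ to the upper/lower sums is controlled by $\int_{0}^{h}g(t)\,\mathrm{d}t\to 0$, so convergence to the common limit $\int_{0}^{\infty}p^{1}(t,x)\,\mathrm{d}t=(1-|x|)^{+}/\sigma^{2}$ from Lemma~\ref{lem:ptri} is preserved as $h\downarrow 0$.
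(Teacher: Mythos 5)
Your treatment of $x\neq 0$ is essentially the paper's: there $p^1(\cdot,x)$ is bounded, continuous, and dominated by the non-increasing integrable function $p^1(\cdot,0)$, and your three-regime split uses the same two representations \eqref{eqn:pfct} and \eqref{eqn:pfctG} that the paper uses for its uniform-continuity argument. The genuine gap is at $x=0$, exactly the point you single out as the main obstacle, and your way of closing it does not work. The sufficient condition you invoke omits boundedness, and boundedness cannot be dispensed with: in the definition of direct Riemann integrability quoted before the lemma, the upper sum $h\sum_{n\geq 1}\sup_{0\leq\delta\leq h}H(nh-\delta)$ contains, for $n=1$, the term $h\sup_{0<t\leq h}H(t)$. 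From \eqref{eqn:pfctG}, $p^1(t,0)=(2\pi\sigma^2 t)^{-1/2}\bigl(1+o(1)\bigr)$ as $t\downarrow 0$, so $\sup_{0<t\leq h}p^1(t,0)=+\infty$, and hence the upper sum equals $+\infty$ for every $h>0$. Your proposed repair --- that the contribution of the initial block is ``controlled by $\int_0^h g(t)\,\mathrm{d}t\to 0$'' --- silently replaces a supremum by an integral; that replacement is legitimate for the lower sums but is precisely what the upper sums forbid. No choice of dominating function $g$ can fix this: a function with a $t^{-1/2}$ blow-up at the origin never has finite upper sums, so the continuity-plus-domination route is structurally incapable of handling $x=0$.

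The paper closes the $x=0$ case by a different mechanism, monotonicity, and that is what you should appeal to. At $x=0$ (and $\eta=1$) each term of the Fourier representation \eqref{eqn:pfct} equals $\sin^2(k\pi/2)\,e^{-k^2\sigma^2\pi^2 t/8}$, hence is nonnegative and non-increasing in $t$; therefore $p^1(\cdot,0)$ is nonnegative, non-increasing, and Lebesgue integrable (Lemma \ref{lem:ptri}), and the paper concludes by citing the criterion for monotone integrable functions from chapter 4.4 of Daley and Vere-Jones. This is not a cosmetic difference: monotonicity is the hypothesis under which the cited criterion is stated, whereas your block-wise estimate is a false statement about Riemann sums. (Your computation does expose a genuine subtlety --- since $p^1(\cdot,0)$ is unbounded at the origin, the monotone criterion must be read as tolerating an integrable singularity at $t=0$, as in its use with the key renewal theorem --- but that is a matter of the cited reference's conventions, and in any case it cannot be repaired by the domination argument you propose.)
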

\begin{proof}
We will start by considering the case when $x=0$. The function $p^1(t,0)$ is directly Riemann integrable if $p^1(t,0)$ is nonegative, monotonically decreasing and Lebesgue integrable (see chapter 4.4 in \citet{Daley_VereJones:1988}). Since each term in the representation \eqref{eqn:repF} is nonegative and monotonically decreasing for $x=0$ so is $p^1(t,0)$, and by Lemma \ref{lem:ptri} the integral of $p^1(t,0)$ over $[0,\infty]$ is given by $\int_0^\infty p^1(t,0) \text{d}t=1$ and thus $p^1(t,0)$ is Lebesgue integrable which proves that $p^1(t,0)$ is directly Riemann integrable.

Next let $x \in [-1,1] \setminus \{0\}$. The function $p(t,x)$ is directly Riemann integrable with respect to $t$ if $p^1(t,x) \geq 0$, $p(t,x)$ is uniformly continuous in $t$ and bounded from above by a monotonically decreasing integrable function (see chapter 4.4 in \citet{Daley_VereJones:1988}). Since $p(t,x)$ is a probability distribution for each $t$ it is clear that $p(t,x) \geq 0$. To show uniform continuity we will split the interval $[0,\infty)$ into two parts, say $[0,1]$ and $[1,\infty)$, and show that $p^1(t,x)$ is uniformly continuous on each part. For the interval $[0,1]$ we will use the representation \eqref{eqn:repG}. Let $g_k^G$ and $G_n^G$ be defined as in the proof of Lemma \ref{lem:ptrip}. It is clear that each $g_k^G$ is uniformly continuous in $t$ and thus also $G_n^G$ is uniformly continuous for each $n<\infty$. If we can shown that $G_n^G(t,x)$ for each  $x \in [-1,1] \setminus \{0\}$ converges uniformly with respect to $t$ over $[0,1]$ as $n \uparrow \infty$, then also the limit $p(t,x)$ will be uniformly continuous. Rewrite $G_n^G$ as $G_n^G(t,x)=\sum_{k=0}^{n} \tilde{g}_k^G(t,x)$ where $\tilde{g}_0^G(t,x)=g^G_{0}(t,x)-g^G_{2}(t,x)$  and
\begin{align*}
\tilde{g}_k^G(t,x) = g^G_{4k}(t,x)-g^G_{2-4k}(t,x)+g^G_{-4k}(t,x)-g^G_{2+4k}(t,x), \text{ for } k \geq 1.
\end{align*}
According to Weierstrass M-test, if there is a series of constants $M_k$ such that $\sum_{k=0}^{\infty} M_k$ is convergent and $|\tilde{g}_k^G(t,x)| \leq M_k$ for all $t \in [0,1]$ then $G_n^G$ converges uniformly in $[0,1]$ as $n \uparrow \infty$. The functions $g_k(t,x)$ attains its maximum at $t=(x-k)^2/\sigma^2 \wedge 1$ for $t \in [0,1]$, and thus $g_k(t,x) \leq g_k((x-k)^2/\sigma^2 \wedge 1,x)$. The function $g_0(x^2/\sigma^2 \wedge 1,x)$ is bounded and it is easily seen that the functions $g_k^G$ may be bounded by $C/(1+k^2)$, for some bounded constant $C$, and which is clearly convergent. Hence, for each $x \in [-1,1] \setminus \{0\}$, $p(\cdot,x)$ is uniformly continuous in $[0,1]$. To show uniform continuity in $[1,\infty)$ we will use the representation \eqref{eqn:repF}. Let $t \geq 1$, then
\begin{align*}
|p^1(t+\delta,x)-p^1(t,x)| & \leq \sum_{k=1}^{\infty} e^{-\frac{k^2 \sigma^2 \pi^2}{8}t} |e^{-\frac{k^2 \sigma^2 \pi^2}{8}\delta}-1| \\
&\leq \sum_{k=1}^{\infty} \frac{8^2}{k^4 \sigma^4 \pi^4} \frac{k^2 \sigma^2 \pi^2}{8} \delta =  \delta \frac{3}{4 \sigma^2}
\end{align*}
where we used the inequalites $e^{-y} \leq y^{-2}$ and $|e^{-y}-1|\leq y$ which holds for $y \geq 0$. Hence for every $\epsilon>0$ we may chose $\delta$ such that $\delta < 4 \sigma^2 \epsilon/3$ which holds for every $t$ in $[1,\infty)$. Hence $p^1(\cdot,x)$ is also uniformly continuous in $[1,\infty)$, which together with the previous result yields that $p(\cdot,x)$ is uniformly continuous in $[0,\infty)$. In the proof of Lemma \ref{lem:ptrip} we showed that $p(t,x) \leq p(t,0)$, and that $p(t,0)$ is a monotonically decreasing Lebesgue integrable function. Hence, $p(t,x)$ is also directly Riemann integrable with respect to $t$ for $x \in [-1,1] \setminus \{0\}$, which together with the first result of this proof yeilds that $p(t,x)$ is directly Riemann integrable for $x \in [-1,1]$.
\end{proof}

The next two lemmas regards properties of the random variable $\tau^{\eta}$ defined earlier in this section. Let $F_{\tau^{\eta}}$ denote the distribution function of $\tau^{\eta}$. Lemma \ref{lem:taudist} states that that $\tau^{\eta}$ has a density, which we will denote by $f_{\tau^{\eta}}$.

\begin{lemma} \label{lem:Etau}
The expectation of $\tau^{\sigmaoptk \eta}$ is given by $E[\tau^{\sigmaoptk \eta}] = \eta^2/\sigma^2$ .
\end{lemma}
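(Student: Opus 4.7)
The plan is to compute $E[\tau^\eta]$ from the survival probability of the absorbed process, invoking the integrated density formula of Lemma~\ref{lem:ptri}. Since $\tau^\eta \geq 0$, I would first write
\[
E[\tau^{\eta}] = \int_0^\infty P(\tau^{\eta} > t)\, dt,
\]
and note that, by the very definition of $p^{\eta}$ as the transition density of $X^{\sigma}$ absorbed at $\pm \eta$, the survival probability is
\[
P(\tau^{\eta} > t) = \int_{-\eta}^{\eta} p^{\eta}(t,x)\, dx.
\]

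Next I would reduce to the case $\eta = 1$ by Brownian scaling. Setting $\widetilde{X}_s := X^{\sigma}_{\eta^2 s}/\eta = \sigma \widetilde{W}_s$, with $\widetilde{W}_s := W_{\eta^2 s}/\eta$ again a standard Wiener process, shows that $\tau^{\eta}$ and $\eta^2 \tau^{1}$ have the same distribution, so
\[
E[\tau^{\eta}] = \eta^2 E[\tau^{1}].
\]
It therefore suffices to prove $E[\tau^{1}] = 1/\sigma^2$.

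For this, since $p^{1} \geq 0$, Tonelli's theorem allows me to swap the order of integration:
\[
E[\tau^{1}] = \int_0^\infty \int_{-1}^{1} p^{1}(t,x)\, dx\, dt = \int_{-1}^{1} \int_0^\infty p^{1}(t,x)\, dt\, dx.
\]
Applying Lemma~\ref{lem:ptri} to the inner integral yields $\int_0^\infty p^{1}(t,x)\, dt = (1-|x|)^+/\sigma^2$, and integrating the triangular function over $[-1,1]$ gives $1$, so $E[\tau^{1}] = 1/\sigma^2$. Combining with the scaling identity finishes the proof.

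There is no really hard step here: the two points requiring a little care are the identification of the survival probability with the mass of $p^{\eta}(t,\cdot)$ on $[-\eta,\eta]$ (immediate from the interpretation of $p^{\eta}$ as the sub-probability density of the absorbed diffusion), and the exchange of integrals, which is automatic by Tonelli since the integrand is nonnegative. Everything else is a direct plug-in to the already established Lemma~\ref{lem:ptri}.
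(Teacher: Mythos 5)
Your proof is correct, but it takes a genuinely different route from the paper. The paper's proof is the classical one-step argument from diffusion theory: the function $g(x_0)=E[\tau^{\eta}\,|\,X_0=x_0]$ solves the ODE $\tfrac{\sigma^2}{2}g''(x_0)=-1$ with absorbing boundary conditions $g(\pm\eta)=0$ (citing Cox and Miller), whose solution $g(x_0)=(\eta^2-x_0^2)/\sigma^2$ gives $g(0)=\eta^2/\sigma^2$ directly. You instead integrate the survival function, identify $P(\tau^{\eta}>t)$ with the mass $\int_{-\eta}^{\eta}p^{\eta}(t,x)\,\text{d}x$ of the absorbed density, reduce to $\eta=1$ by Brownian scaling, and then apply Tonelli together with Lemma~\ref{lem:ptri}, using $\int_{-1}^{1}(1-|x|)^+\text{d}x=1$. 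Both routes are sound, and yours involves no circularity since Lemma~\ref{lem:ptri} is proved independently of this lemma and precedes it in the paper; the identities you invoke (the survival-probability representation and the scaling $\tau^{\eta}\stackrel{d}{=}\eta^2\tau^{1}$) are also ones the paper itself uses later, in Lemma~\ref{lem:taudist} and in the proof of Theorem~\ref{thm:main}. What the paper's approach buys is brevity and independence from the series machinery: it needs nothing beyond a standard ODE fact. What your approach buys is a unified picture: it exhibits $E[\tau^{1}]$ as the total mass of the Green's function $\int_0^\infty p^{1}(t,\cdot)\,\text{d}t$, so the expectation formula and the triangular limit law of Theorem~\ref{thm:main} are seen to come from the same object, and it reuses work already done rather than importing an external result.
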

\begin{proof} Let $g(x_0)=E[\tau^{\sigmaoptk \eta}]$, where $x_0$ denotes the initial point of the process. The function $g$ satisfies the following ordinary differential equation (see \citet{Cox_Miller:1965})
\begin{align*}
\frac{\sigma^2}{2} \frac{d^2 g}{d x_0^2}(x_0) = -1 , \quad m_1(-\eta)=m_1(\eta)=0 \, .
\end{align*}
The solution to this problem, with $x_0=0$, is given by $g(0)=\eta^2/\sigma^2$, as was to be shown.
\end{proof}

\begin{lemma} \label{lem:taudist}
The random variable $\tau^\eta$ has a density, denoted by $f_{\tau^\eta}$, that may be represented as
\begin{multline*}
f_{\tau^\eta}(t) = \sum_{k=-\infty}^{\infty} \frac{1}{2 t \sqrt{2 \pi \sigma^2 t}} \Biggl( (\eta+4k\eta) e^{-\frac{(\eta+4k\eta)^2}{2 \sigma^2 t}}-(\eta+2-4k\eta)e^{-\frac{(\eta+2-4k\eta)^2}{2 \sigma^2 t}} \\
+(\eta-4k\eta) e^{-\frac{(\eta-4k\eta)^2}{2 \sigma^2 t}}-(\eta-2+4k\eta) e^{-\frac{(\eta-2+4k\eta)^2}{2 \sigma^2 t}}  \Biggr),
\end{multline*}
for all $t \in [0,\infty)$.
\end{lemma}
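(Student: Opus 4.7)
The plan is to recover $f_{\tau^\eta}$ by differentiating the survival function of $\tau^\eta$. Since $\{\tau^\eta > t\}$ coincides with the event that the absorbed process remains strictly inside $(-\eta,\eta)$ at time $t$, one has
$$P(\tau^\eta > t) = \int_{-\eta}^\eta p^\eta(t,x)\,\mathrm{d}x,$$
so that $f_{\tau^\eta}(t) = -\tfrac{\mathrm{d}}{\mathrm{d}t}\int_{-\eta}^\eta p^\eta(t,x)\,\mathrm{d}x$ as soon as the right-hand side is shown to be differentiable. Everything then reduces to a careful differentiation of this sum.

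For this, I would insert the Gaussian-kernel representation \eqref{eqn:pfctG} and exchange integral and sum via Lemma~\ref{lem:ptrip}(b), equation~\eqref{eqn:repG}. Each inner integral is of the form
$$\int_{-\eta}^\eta \frac{1}{\sqrt{2\pi\sigma^2 t}}\,e^{-\frac{(x-a)^2}{2\sigma^2 t}}\,\mathrm{d}x = \Phi\!\left(\tfrac{\eta - a}{\sigma\sqrt t}\right) - \Phi\!\left(\tfrac{-\eta - a}{\sigma\sqrt t}\right),$$
with $a = 4k\eta$ from the first Gaussian and $a = 2\eta - 4k\eta$ from the second. Using the elementary identity
$$\frac{\mathrm{d}}{\mathrm{d}t}\Phi\!\left(\tfrac{c}{\sigma\sqrt t}\right) = -\frac{c}{2t\sqrt{2\pi\sigma^2 t}}\,e^{-c^2/(2\sigma^2 t)},$$
the $t$-derivative of the $k$th term produces four exponentials, indexed by the two boundary values $x = \pm\eta$ and the two choices of $a$. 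After a bookkeeping of signs, the positive contributions come from $(\eta\pm 4k\eta)\exp\!\bigl(-(\eta\pm 4k\eta)^2/(2\sigma^2 t)\bigr)$ and the negative contributions from the corresponding expressions with the shift $2\eta$, matching precisely the four groups of terms in the statement.

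The main obstacle, and what needs real justification, is the interchange of the derivative with the infinite sum. I would handle this via a Weierstrass $M$-test on arbitrary compact subsets $[t_1,t_2] \subset (0,\infty)$: the generic term of the formally differentiated series is bounded by a constant multiple of $|k|\,\eta\,\exp(-\alpha k^2/t_2)$ for some $\alpha > 0$ depending on $\eta$ and $\sigma$, which is summable uniformly in $t \in [t_1,t_2]$. Thus both the original series and its termwise derivative converge uniformly on compact subsets of $(0,\infty)$, which legitimates the interchange of $\tfrac{\mathrm{d}}{\mathrm{d}t}$ and $\sum_k$ and delivers the claimed representation. As a sanity check one can verify that the resulting function is nonnegative, integrates to $1$ over $[0,\infty)$, and has first moment $\eta^2/\sigma^2$ in agreement with Lemma~\ref{lem:Etau}.
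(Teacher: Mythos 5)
Your proposal is correct and follows essentially the same route as the paper: both write $P(\tau^\eta > t)=\int_{-\eta}^{\eta} p^\eta(t,x)\,\mathrm{d}x$ via the Gaussian-kernel representation \eqref{eqn:repG} and Lemma~\ref{lem:ptrip}, integrate each Gaussian to a difference of normal distribution functions, differentiate term by term, and justify the interchange of $\tfrac{\mathrm{d}}{\mathrm{d}t}$ and $\sum_k$ by a Weierstrass $M$-test. The only (minor) difference is that the paper's $M$-test is global, bounding each differentiated term by $C/(1+k^2)$ uniformly on all of $[0,\infty)$ by maximizing $e^{-c^2/(2\sigma^2 t)}t^{-3/2}$ over $t$, which also yields boundedness of $f_{\tau^\eta}$ as a by-product, whereas your compact-subset version of the $M$-test suffices for the stated representation on $(0,\infty)$ but not for that extra conclusion.
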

\begin{proof}
In this proof we will use the representation \eqref{eqn:repG}. Let $g_k^G$ and $G_n^G$ be defined as in the proof of Lemma \ref{lem:ptrip}. By the use of Lemma \ref{lem:ptrip} for $t \in [0,\infty)$
\begin{align*}
P(\tau^{\eta} \leq t) = 1-\sum_{k=-\infty}^{\infty} \int_{-\eta}^{\eta} (g_{4k\eta}^G(t,x)-g_{2-4k\eta}^G(t,x)) \text{d} x .
\end{align*}
If each term in the sum above is differentiable on $[0,\infty)$ and 
\begin{align}
\sum_{k=-\infty}^{\infty} \frac{d}{d t} \int_{-\eta}^{\eta} (g_{4k\eta}^G(t,x)-g_{2-4k\eta}^G(t,x)) \text{d} x \label{eqn:dPdtsum}
\end{align}
converges uniformly on $[0,\infty)$ then
\begin{align*}
\frac{d}{dt} P(\tau^{\eta} \leq t) = -\sum_{k=-\infty}^{\infty}  \frac{d}{d t} \int_{-\eta}^{\eta} (g_{4k\eta}^G(t,x)-g_{2-4k\eta}^G(t,x)) \text{d} x .
\end{align*}
Calculating the integral and differentiating with respect to $t$ we get for each term in \eqref{eqn:dPdtsum}
\begin{equation}
\begin{split}
&  \frac{d}{d t} \int_{-\eta}^{\eta} (g_{4k\eta}^G(t,x)-g_{2-4k\eta}^G(t,x)) \text{d} x \\
& = \frac{1}{2 t \sqrt{2 \pi \sigma^2 t}} \Biggl( (\eta+4k\eta) e^{-\frac{(\eta+4k\eta)^2}{2 \sigma^2 t}}-(\eta+2-4k\eta)e^{-\frac{(\eta+2-4k\eta)^2}{2 \sigma^2 t}} \\
& \qquad \qquad \qquad +(\eta-4k\eta) e^{-\frac{(\eta-4k\eta)^2}{2 \sigma^2 t}}-(\eta-2+4k\eta) e^{-\frac{(\eta-2+4k\eta)^2}{2 \sigma^2 t}}  \Biggr), \label{eqn:dPdtterm}
\end{split}
\end{equation}
The maximum of the function $e^{-\frac{(x-k)^2}{2 \sigma^2 t}}/t^{3/2}$ in $[0,\infty)$ is attained at $t=(x-k)^2/(3 \sigma^2)$. For the first term in the expression above we get that
\begin{align*}
\frac{(\eta+4k\eta)}{2 t^{3/2} \sqrt{2 \pi \sigma^2}}e^{-\frac{(\eta+4k\eta)^2}{2 \sigma^2 t}} \leq \left(\frac{3}{2}\right)^{3/2} \frac{\sigma^2 e^{-\frac{3}{2}}}{\eta^2 \sqrt{\pi}} \frac{1}{(1+4k)^{2}},
\end{align*}
which may be bounded by $C/(1+k^2)$, where $C$ is a bounded constant. In a similar manner it can be shown that the rest of the terms in \eqref{eqn:dPdtterm} may also be bounded by $C/(1+k^2)$, and thus
\begin{equation}
\left| \frac{d}{d t} \int_{-\eta}^{\eta} (g_{4k\eta}^G(t,x)-g_{2-4k\eta}^G(t,x)) \text{d} x \right| \leq \frac{4C}{1+k^2}
\end{equation}
Since $\sum_{k=-\infty}^{\infty} 4C/(1+k^2)$ is a convergent series by Wierstrass M-test the sum \eqref{eqn:dPdtsum} converges uniformly on $[0,\infty)$, and hence, the density, $f_{\tau^{\eta}}$, may be represented by the sum \eqref{eqn:dPdtterm}. Since the terms in the sum of \eqref{eqn:dPdtterm} could be bounded by $4C/(1+k^2)$ we have that $|f_{\tau^{\eta}}(t)| \leq 4 C \sum_{k=-\infty}^\infty 1/(1+k^2)<\infty$ which shows that $f_{\tau^{\eta}}(t)$ is bounded in $[0,\infty)$.
\end{proof}

\subsection{Renewal processes}
In this paragraph we will focus on a renewal process denoted by $N$ with idenpendent and identically distributed interarrival times $\{ \tau_i \}_{i \geq 1}$. Define the renewal function $M$ by $M_t=E[N_t]$, and let $\mu$ denote the mean time between renewals, that is $\mu = E[\tau_i]$, which holds for all $i \geq 1$. Next, we will state the key renewal theorem that will be needed later on.

\begin{lemma}[Key renewal theorem] \label{lem:krt}
If $H(\cdot)$ is a directly Riemann-integrable function then
\begin{align*}
\lim_{t \rightarrow \infty} \int_0^t H(t-x) dM(x) = \frac{1}{\mu} \int_0^{\infty} H(x) dx \, .
\end{align*}
\end{lemma}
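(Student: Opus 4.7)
The plan is to reduce the statement to Blackwell's renewal theorem, which in the non-arithmetic case asserts that $M(t+h)-M(t) \to h/\mu$ as $t \to \infty$ for every fixed $h>0$. Since Lemma \ref{lem:taudist} guarantees that $\tau^\eta$ admits a density, the interarrival distribution is non-arithmetic and Blackwell's theorem applies. Blackwell's result itself is non-elementary (the standard modern proof uses a coupling argument) and I would simply quote it from Chapter~4.4 of \citet{Daley_VereJones:1988} rather than reprove it here.

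First I would verify the conclusion for an indicator of an interval. For $H(x) = \mathbf{1}_{[0,h]}(x)$,
\begin{align*}
\int_0^t H(t-x)\,dM(x) = M(t)-M((t-h)^+),
\end{align*}
which tends to $h/\mu = \mu^{-1}\int_0^\infty H(x)\,dx$ as $t \to \infty$ by Blackwell. Linearity then extends the conclusion to every compactly supported step function of the form $\sum_{j=1}^{N} c_j \mathbf{1}_{[a_j,b_j]}$.

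To pass to a general directly Riemann integrable $H \geq 0$, I would sandwich $H$ between the upper and lower step-function approximations on a mesh of width $h>0$,
\begin{align*}
\underline{H}_h(x) = \sum_{n=1}^{\infty} \inf_{0 \leq \delta \leq h} H(nh-\delta)\,\mathbf{1}_{[(n-1)h,nh)}(x), \quad \overline{H}_h(x) = \sum_{n=1}^{\infty} \sup_{0 \leq \delta \leq h} H(nh-\delta)\,\mathbf{1}_{[(n-1)h,nh)}(x),
\end{align*}
so that $\underline{H}_h \leq H \leq \overline{H}_h$ pointwise. Truncations of these step functions fall under the previous case, while the tails are uniformly negligible because the defining sums in the notion of direct Riemann integrability are finite. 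Passing to the limits yields
\begin{align*}
\frac{1}{\mu}\int_0^\infty \underline{H}_h(x)\,dx \leq \liminf_{t \to \infty} \int_0^t H(t-x)\,dM(x) \leq \limsup_{t \to \infty} \int_0^t H(t-x)\,dM(x) \leq \frac{1}{\mu}\int_0^\infty \overline{H}_h(x)\,dx.
\end{align*}
By the very definition of direct Riemann integrability, both outer integrals converge to $\mu^{-1}\int_0^\infty H(x)\,dx$ as $h \downarrow 0$, and the squeeze closes the argument. A general $H$ is reduced to this case by splitting into positive and negative parts.

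The main obstacle is Blackwell's theorem itself; once that is in hand, the remainder is a routine squeeze argument exploiting precisely the Riemann-sum structure built into the definition of direct Riemann integrability. Because the key renewal theorem is textbook material, in a paper of this scope one simply cites Chapter~4.4 of \citet{Daley_VereJones:1988} and proceeds.
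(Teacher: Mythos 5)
Your proposal is correct in outline, but it does substantially more than the paper: the paper's entire ``proof'' of this lemma is the citation ``See e.g.\ \citet{Daley_VereJones:1988},'' treating the key renewal theorem as imported textbook material, exactly as you suggest in your closing sentence. What you have sketched is essentially the standard argument found in such references: Blackwell's renewal theorem gives the result for $H=\mathbf{1}_{[0,h]}$, linearity extends it to compactly supported step functions, and the upper/lower step envelopes built into the definition of direct Riemann integrability give the squeeze. Two remarks on the details. First, your step ``the tails are uniformly negligible because the defining sums \ldots are finite'' glosses over a genuinely needed ingredient: finiteness of the dRi sums controls $\int_A^\infty \overline{H}_h(x)\,dx$, but to control $\int_{t-A}^{t}$-type contributions of the tail against $dM$ you also need a bound on renewal increments that is uniform in $t$, e.g.\ the subadditivity estimate $M(t+h)-M(t)\le M(h)+1$; without it, small Lebesgue integral of the tail does not by itself imply small $dM$-integral. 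Second, you correctly identify a hypothesis the paper's statement of the lemma silently omits: Blackwell's theorem (hence the key renewal theorem) requires the interarrival distribution to be non-arithmetic, and you rightly note this is satisfied in the paper's application because $\tau^{\eta}$ has a density by Lemma \ref{lem:taudist}. So your version is both more complete and, in that one respect, more careful than the paper's; the paper's approach buys brevity at the cost of leaving the non-arithmetic condition implicit.
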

\begin{proof}
See e.g. \citet{Daley_VereJones:1988}.
\end{proof}

Let $F_{\tau}$ denote the common distribution function of the stochastic variables $\tau_i$. Since the components of $\{ \tau_i \}_{i\geq 1}$ are idependent and identically distributed the distribution function of the sum $\sum_{i=1}^k \tau_i$ may be represented by the $k$-fold convolution of $F_{\tau}$ (here denoted $F^{*k}_{\tau}$), i.e.
\begin{align*}
P\left(\sum_{i=1}^k \tau_i < t \right) = F^{*k}_{\tau}(t) \, .
\end{align*}

\begin{lemma}[Theorem 5.4 in \citet{Heyman_Sobel:1982}]
There exists a one-to-one correspondence between $F_{\tau}$ and $M$, and $M$ has the representation
\begin{align*}
M_t = \sum_{k=1}^{\infty} F^{*k}_{\tau}(t) \, .
\end{align*}
\end{lemma}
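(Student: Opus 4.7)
The plan is to prove the two assertions in sequence: first establish the series representation for $M_t$, then use Laplace--Stieltjes transforms to invert the map $F_\tau \mapsto M$.

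For the representation, I would start from the identity $N_t = \sum_{k=1}^\infty \mathbf{1}_{\{S_k \le t\}}$, where $S_k = \tau_1+\cdots+\tau_k$, valid because $N_t \ge k$ holds exactly when $S_k \le t$. Taking expectations and applying Tonelli's theorem (all summands are nonnegative) yields $M_t = \sum_{k=1}^\infty P(S_k \le t)$. Since the $\tau_i$ are i.i.d.\ and nonnegative, $S_k$ has distribution function $F^{*k}_\tau$, giving $M_t = \sum_{k=1}^\infty F^{*k}_\tau(t)$. A short argument is needed to check $M_t<\infty$: using $\mu>0$, pick $\delta>0$ with $p := P(\tau>\delta)>0$; then $P(S_k \le t)$ is bounded by the probability that a $\mathrm{Bin}(k,p)$ variable does not exceed $t/\delta$, which decays geometrically in $k$ for fixed $t$, so the series is summable.

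For the one-to-one correspondence, I would pass to Laplace--Stieltjes transforms. Set $\phi(s) = \int_0^\infty e^{-st}\,dF_\tau(t)$ for $s>0$; by independence, the LST of $F^{*k}_\tau$ equals $\phi(s)^k$. Defining $\hat{M}(s) = \int_{[0,\infty)} e^{-st}\,dM(t)$ and interchanging sum and integral via monotone convergence on the series representation yields
\[
\hat{M}(s) \;=\; \sum_{k=1}^\infty \phi(s)^k \;=\; \frac{\phi(s)}{1-\phi(s)},
\]
where the geometric series is valid because $\phi(s)<1$ for $s>0$ (as $\tau>0$ a.s.). Solving gives $\phi(s) = \hat{M}(s)/(1+\hat{M}(s))$, so $\phi$ is recovered from $\hat{M}$. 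By uniqueness of the Laplace--Stieltjes transform on $[0,\infty)$, this identifies $F_\tau$ from $M$ (and vice versa), completing the bijection.

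The main obstacle is the Laplace inversion step: one must verify $\phi(s)<1$ strictly so that the geometric series converges and can be inverted, and invoke uniqueness of LST to recover the distribution from the transform. Both are standard, but they are the only nontrivial ingredients; the rest of the argument consists of interchanges of sum and integral which are justified throughout by nonnegativity and monotone convergence.
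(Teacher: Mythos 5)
Your proof is correct, but there is nothing in the paper to compare it against: the paper does not prove this lemma at all, it simply imports it by citation as Theorem 5.4 of \citet{Heyman_Sobel:1982}. Your argument is the standard textbook one and is sound in all essentials: the identity $N_t=\sum_{k\ge 1}\mathbf{1}_{\{S_k\le t\}}$ plus Tonelli gives the series representation, the binomial bound gives summability (strictly speaking $P(\mathrm{Bin}(k,p)\le t/\delta)$ decays like a polynomial times a geometric factor, but that is still dominated by $C\rho^k$ for some $\rho<1$, so the conclusion stands), and the Laplace--Stieltjes inversion $\phi(s)=\hat{M}(s)/(1+\hat{M}(s))$ together with uniqueness of the transform yields the one-to-one correspondence. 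In the setting of the paper the hypotheses you need are automatic: $\tau^{\eta}$ is a hitting time of a nondegenerate Brownian motion at level $\eta>0$, so $\tau>0$ almost surely and $\phi(s)<1$ for $s>0$, and $\mu=\eta^2/\sigma^2>0$ by Lemma~\ref{lem:Etau}. So your proposal supplies a self-contained proof of a statement the authors chose to leave as a black box.
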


Under the assumption that $F_{\tau}$ has a density (here denoted $f_{\tau}$) we have that
\begin{align*}
f^{*k}_{\tau}(t) = \frac{d}{dt} F^{*k}_{\tau}(t) \, ,
\end{align*}
where $f^{*k}_{\tau}$ is the $k$-th convolution of the density function $f_{\tau}$. We may now define the renewal density $m$ by
\begin{align}
m_t := \frac{d}{dt} M_t  = \sum_{k=1}^{\infty} f^{*k}_{\tau}(t) \, . \label{eqn:m_expr}
\end{align}

\section{Main result} \label{sec:main_res}
In this section we state and prove the main result of this paper. To ease the notation in the proof we will let $Z^{\sigmaoptk \eta}_t=X^{\sigmaoptk \eta}_t-X^{\sigmaoptk \eta}_{\varphi^{\sigmaoptk \eta}_t}$.

\begin{theorem} \label{thm:main}
Fix a point $t>0$, then
\begin{align*}
\frac{1}{\eta} \left( X^{\sigmaopt}_t-X^{\sigmaopt}_{\varphi^{\sigmaoptk \eta}_t}  \right) \stackrel{d}{\longrightarrow} \Lambda \quad \text{as } \eta \rightarrow 0 \, , 
\end{align*}
where $\Lambda$ is a stochastic variable with density function given by
\begin{equation*}
f_{\Lambda}(z) = (1-|z|)^+ \, .
\end{equation*}
\end{theorem}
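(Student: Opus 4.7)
The plan is to write down an explicit renewal representation of the density of $Z^\eta_t = X^\sigma_t - X^\sigma_{\varphi^\eta_t}$, rescale it, and apply the key renewal theorem (Lemma~\ref{lem:krt}) together with Lemmas~\ref{lem:ptri}, \ref{lem:pdri}, and \ref{lem:Etau}. First, decomposing $\{Z^\eta_t \in \mathrm{d}z\}$ according to the number of renewals $N^\eta_t = k$ before time $t$, conditioning on the last renewal time $t^\eta_k = s$, and using independence of the post-renewal increments gives, for $z \in [-\eta, \eta]$,
\begin{align*}
f_{Z^\eta_t}(z) = p^\eta(t, z) + \int_0^t m^\eta(s) \, p^\eta(t-s, z) \, \mathrm{d}s,
\end{align*}
where the first term captures $\{N^\eta_t = 0\}$ and the convolution captures $\{N^\eta_t \geq 1\}$ through $m^\eta = \sum_{k \geq 1} f_{t^\eta_k}$ from \eqref{eqn:m_expr}.

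Next I rescale. By Brownian scaling $\tau^\eta \stackrel{d}{=} \eta^2 \tau^1$, so $m^\eta(s) = \eta^{-2} m^1(s/\eta^2)$, and the transition density $p^\eta$ rescales analogously. Substituting $s \mapsto t - \eta^2 v$ and setting $T := t/\eta^2$, the density of $Z^\eta_t/\eta$ at $z \in [-1,1]$ reduces to
\begin{align*}
f_{Z^\eta_t/\eta}(z) \;=\; \eta \, p^\eta(t, \eta z) \;+\; \int_0^{T} m^1(T-v) \, p^1(v, z) \, \mathrm{d}v.
\end{align*}
The boundary term vanishes as $\eta \downarrow 0$ because every Fourier mode in \eqref{eqn:pfct} decays like $\exp(-c T)$ with $T \to \infty$. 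For the main term, Lemma~\ref{lem:pdri} ensures $p^1(\cdot, z)$ is directly Riemann integrable, so the key renewal theorem (Lemma~\ref{lem:krt}) applied to the $\eta = 1$ renewal process yields
\begin{align*}
\int_0^T m^1(T - v) \, p^1(v, z) \, \mathrm{d}v \;\xrightarrow[T \to \infty]{}\; \frac{1}{E[\tau^1]} \int_0^\infty p^1(x, z) \, \mathrm{d}x \;=\; \sigma^2 \cdot \frac{(1-|z|)^+}{\sigma^2} \;=\; (1-|z|)^+,
\end{align*}
using $E[\tau^1] = 1/\sigma^2$ from Lemma~\ref{lem:Etau} and the identity of Lemma~\ref{lem:ptri}.

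Pointwise convergence of $f_{Z^\eta_t/\eta}$ to $f_\Lambda(z) = (1-|z|)^+$ on the common compact support $[-1,1]$ upgrades to $L^1$ convergence of densities by Scheff\'e's lemma (since both sides integrate to $1$), hence to convergence in total variation and in particular convergence in distribution. The main obstacle is the uniform handling of direct Riemann integrability of $p^1(\cdot, z)$ across all $z \in [-1, 1]$: at $z = 0$ the function is only monotonically decreasing, while for $z \neq 0$ one has to establish uniform continuity on $[0, \infty)$ by splitting the time interval and using the two complementary series representations \eqref{eqn:pfct} and \eqref{eqn:pfctG}. Lemma~\ref{lem:pdri} carries out precisely this work, so once it is in hand, the remainder is essentially bookkeeping.
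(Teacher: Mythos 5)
Your proof is correct and takes essentially the same route as the paper's: the renewal representation $f_{Z^{\eta}_t/\eta}(z) = p^{1}(T,z) + \int_0^{T} m^{1}(T-v)\, p^{1}(v,z)\, \mathrm{d}v$ with $T=t/\eta^2$, the vanishing of the boundary term, and then the key renewal theorem (Lemma~\ref{lem:krt}) combined with Lemmas~\ref{lem:Etau}, \ref{lem:pdri} and \ref{lem:ptri}. The only differences are presentational: you reach the renewal representation by conditioning directly on the last renewal epoch, whereas the paper conditions on the age $t-\varphi^{\eta}_t$ and lets the survival probabilities cancel, and your explicit appeal to Scheff\'e's lemma to upgrade pointwise convergence of densities to convergence in distribution makes rigorous a step the paper leaves implicit.
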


\begin{proof}
Denote by $Y^{\sigmaoptk \eta}_t(u)$ the quantity
\begin{equation*}
Y^{\sigmaoptk \eta}_t(u)=X^{\sigmaopt}_t-X^{\sigmaopt}_{\varphi^{\sigmaoptk \eta}_t}| \{ t-\varphi^{\sigmaoptk \eta}_t=u \} \, .
\end{equation*}
Because of the time homogeneity of the process $X^{\sigmaopt}$ the following equality in distribution holds
\begin{equation*}
X^{\sigmaopt}_t-X^{\sigmaopt}_{\varphi^{\sigmaoptk \eta}_t}|\{ t-\varphi^{\sigmaoptk \eta}_t=u \} \stackrel{d}{=} X^{\sigmaopt}_u|\{ | X^{\sigmaopt}_s | < \eta , \, 0 \leq s \leq u \} \, .
\end{equation*}
Consequently the density function of $Y^{\sigmaoptk \eta}_t(u)$ can be expressed as
\begin{equation*}
f_{Y^{\sigmaoptk \eta}_t(u)}(y)=\frac{p^{\sigmaoptk \eta}(u,y)}{P(\tau^{\sigmaoptk \eta} > u)} \, ,
\end{equation*}
The distribution function of $Z^{\sigmaoptk \eta}_t$ is given by
\begin{equation*}
f_{Z^{\sigmaoptk \eta}_t}(z) = \int_0^t f_{Y^{\sigmaoptk \eta}_t(u)}(z) \text{d}F_{t-\varphi^{\sigmaoptk \eta}_t}(u) \, ,
\end{equation*}
where
\begin{align*}
& \text{d}F_{t-\varphi^{\sigmaoptk \eta}_t}(u) = \left\{ \delta(u-t) P(\tau^{\sigmaoptk \eta}>t)+\sum_{k=1}^{\infty} \frac{\partial}{\partial u} P(t-\varphi^{\sigmaoptk \eta}_t \leq u, \, N^{\sigmaoptk \eta}_{t}=k) \right\} \text{d}u \, .
\end{align*}
The probability in the last term of the above expression can be rewritten as
\begin{equation*}
\begin{split}
P(t-\varphi^{\sigmaoptk \eta}_t \leq u, \, N^{\sigmaoptk \eta}_{t}=k) & = P \left( t - \sum_{j=1}^{k} \tau^{\sigmaoptk \eta}_{j} \leq u , \, \sum_{j=1}^{k} \tau^{\sigmaoptk \eta}_{j} < t < \sum_{j=1}^{k} \tau^{\sigmaoptk \eta}_{j} + \tau^{\sigmaoptk \eta}_{k+1} \right) \\
& = P \left( t - \sum_{j=1}^{k} \tau^{\sigmaoptk \eta}_{j} \leq u , \, 0 < t - \sum_{j=1}^{k} \tau^{\sigmaoptk \eta}_{j} < \tau^{\sigmaoptk \eta}_{k+1} \right) \\
& = \int_{t-u}^{\infty} \int_{t-v}^{\infty} f_{\tau^{\sigmaoptk \eta}}^{* k} (v) f_{\tau^{\sigmaoptk \eta}}(z) \text{d}z \, \text{d}v \, ,
\end{split}
\end{equation*}
where $f_{\tau^{\sigmaoptk \eta}}$ (which exists due to Lemma \ref{lem:taudist})  is the density function of $\tau^{\sigmaoptk \eta}$, and $f^{*k}_{\tau^{\sigmaoptk \eta}}$ denotes the $k$-th convolution of  $f_{\tau^{\sigmaoptk \eta}}$. Differentiating the above expression with respect to $u$ yields
\begin{equation*}
\begin{split}
\frac{\partial}{\partial u} \left( \int_{t-u}^{\infty} \int_{t-v}^{\infty} f_{\tau^{\sigmaoptk \eta}}^{* k} (v) f_{\tau^{\sigmaoptk \eta}}(z) dz \, dv \right) & = \int_{u}^{\infty} f_{\tau^{\sigmaoptk \eta}}^{* k} (t-u) f_{\tau^{\sigmaoptk \eta}}(z) \text{d}z \\
& = f_{\tau^{\sigmaoptk \eta}}^{* k} (t-u) P(\tau^{\sigmaoptk \eta}>u) \, .
\end{split}
\end{equation*}
This gives us that
\begin{equation*}
\text{d}F_{t-\varphi^{\sigmaoptk \eta}_t}(u) = \left\{ \delta(u-t) P(\tau^{\sigmaoptk \eta}>t)+\sum_{k=1}^{\infty} f_{\tau^{\sigmaoptk \eta}}^{* k} (t-u) P(\tau^{\sigmaoptk \eta}>u) \right\} \text{d}u \, .
\end{equation*}
Using the scaling property of the Brownian motion the following two relations are easily deduced
\begin{align*}
P(\tau^{\sigmaoptk \eta}>t)=P(\tau^{\sigmaoptk 1}>t/\eta^2) \quad \text{and} \quad \frac{1}{\eta} Y^{\sigmaoptk \eta}_t(u) \stackrel{d}{=} Y^{\sigmaoptk 1}_t(u/\eta^2) \, .
\end{align*}
The first of the two relations above yields
\begin{align*}
f_{\tau^{\sigmaoptk \eta}}(t)=-\frac{\text{d}}{\text{d}t}P(\tau^{\sigmaoptk \eta}>t)=-\frac{\text{d}}{\text{d}t}P(\tau^{\sigmaoptk 1}>t/\eta^2) = \frac{1}{\eta^2}f_{\tau^{\sigmaoptk 1}}(t/\eta^2) \, ,
\end{align*}
and consequently
\begin{align*}
& \text{d}F_{t-\varphi^{\sigmaoptk \eta}_t}(u)  = \left\{ \delta(u-t) P(\tau^{\sigmaoptk 1}>t/\eta^2)+\sum_{k=1}^{\infty} \frac{1}{\eta^2} f_{\tau^{\sigmaoptk 1}}^{* k} \left( \frac{t-u}{\eta^2} \right) P(\tau^{\sigmaoptk 1}>u/\eta^2) \right\} \text{d}u  \, .
\end{align*}
The relation $Y^{\sigmaoptk \eta}_t(u)/\eta \stackrel{d}{=} Y^{\sigmaoptk 1}_t(u/\eta^2)$  yields
\begin{align*}
& \int_0^t f_{Y^{\sigmaoptk \eta}_t(u)/\eta}(y) \text{d}F_{t-\varphi^{\sigmaoptk \eta}_t}(u) = \int_0^t f_{Y^{\sigmaoptk 1}_t(u/\eta^2)}(y) \text{d}F_{t-\varphi^{\sigmaoptk \eta}_t}(u) \, ,
\end{align*}
and thus
\begin{align*}
f_{Z^{\sigmaoptk \eta}_t/\eta}(z)  & = \int_0^t \frac{p^{\sigmaoptk  1}(u/\eta^2,z)}{P(\tau^{\sigmaoptk 1}>u/\eta^2)} \delta(u-t) P(\tau^{\sigmaoptk 1}>t/\eta^2) \text{d}u \\
& \quad + \int_0^t \frac{p^{\sigmaoptk  1}(u/\eta^2,z)}{P(\tau^{\sigmaoptk 1}>u/\eta^2)} \sum_{k=1}^{\infty} \frac{1}{\eta^2} f_{\tau^{\sigmaoptk 1}}^{* k} \left( \frac{t-u}{\eta^2} \right) P(\tau^{\sigmaoptk 1}>u/\eta^2) \text{d}u \\
& = p^{\sigmaoptk  1}(t/\eta^2,z) + \int_0^t p^{\sigmaoptk  1}(u/\eta^2,z) \sum_{k=1}^{\infty} \frac{1}{\eta^2} f_{\tau^{\sigmaoptk 1}}^{* k} \left( \frac{t-u}{\eta^2} \right)  \text{d}u \, .
\end{align*}
Now, by a change of variables ($v=(t-u)/\eta^2$)
\begin{align*}
f_{Z^{\sigmaoptk \eta}_t/\eta}(z) =  p^{\sigmaoptk  1}(t/\eta^2,z) + \int_0^{t/\eta^2} p^{\sigmaoptk  1} \left( \frac{t}{\eta^2}-v , z \right) \sum_{k=1}^{\infty} f_{\tau^{\sigmaoptk 1}}^{* k} \left( v \right) \text{d}v \, .
\end{align*}
Since
\begin{align*}
|p^1(t/\eta^2,x)| \leq \sum_{k=1}^{\infty} \frac{8 \eta^2}{k^2 \sigma^2 \pi^2}=\frac{4 \eta^2}{3 \sigma^2},
\end{align*}
we have that $\lim_{\eta \rightarrow 0} p^{\sigmaoptk \eta}(y,t/\eta^2) = 0$. For the second term we have using \eqref{eqn:m_expr}, Lemma \ref{lem:Etau} and Lemma \ref{lem:krt} (which applicable since $p(t,x)$ is a directly Riemann integrable function due to Lemma \ref{lem:pdri})
\begin{align*}
\lim_{\eta \rightarrow 0} \int_0^{t/\eta^2} p^{\sigmaoptk  1} \left( y,\frac{t}{\eta^2}-v \right) \sum_{k=1}^{\infty} f_{\tau^{\sigmaoptk 1}}^{* k} \left( v \right)  dv  = \sigma^2 \int_0^{\infty} p^{\sigmaoptk 1}(y,u) \text{d}u\, .
\end{align*}
Now by Lemma \ref{lem:ptri}
\begin{equation*}
\lim_{\eta \rightarrow 0} f_{Z^{\sigmaoptk \eta}_t/\eta}(z) = (1-|z|)^+ \, ,
\end{equation*}
as was to be shown.
\end{proof}

\begin{remark}
Note that the limiting distribution does not depend on $\sigma$. This is unlike the case when discretization takes place on an equidistant grid, where $\sigma$ affects the variance of the limiting distribution. Instead, in the case of adaptive approximation, $\sigma$ is related to the expected number of discretization points.
\end{remark}

\begin{remark}
In the proof above all interarrival times $\tau_i^{\sigmaoptk  \eta}$ up to the time $t$ is used in order to characterize the distribution of $t-\varphi_t^{\sigmaoptk  \eta}$. However, we belive that $t-\varphi_t^{\sigmaoptk  \eta}$  may be characterized by the dynamics of the process $X$ in a small region around $X_t$, which would imply that the result of Theorem \ref{thm:main} is a local result. From this and the fact that the diffusion coefficient $\sigma$ is scaled away in the limiting expression of the distribution we conjecture that Theorem \ref{thm:main} would hold for a larger class of stochastic processes such as SDE's. We plan to address this in future research.
\end{remark}

\section{Numerical results} \label{sec:num_res}
In this section the transition of $f_{Z^{\sigmaoptk \eta}_t/\eta}$ as $\eta$ goes from some large value towards zero is investigated. We will argue that for large values of $\eta$ the stochastic variable $Z^{\sigmaoptk \eta}_t/\eta$ is approximately normally distributed, and thus as $\eta$ approaches zero we will see that $f_{Z^{\sigmaoptk \eta}_t/\eta}$ goes from the density of a normally distributed random variable to the density of a triangularly distributed random variable.

A total of $50000$ trajectories of the process $X$ was simulated, over a period from $t=0$ to $t=0.5$, with $\sigma=1$, on a time grid with $200001$ equally spaced points. Trajectories of the approximation $X_{\varphi^{\sigmaoptk \eta}_t}$ were calculated for a number of different values of $\eta$ in the range $[0.5,4.0]$.

Recall, from the proof of Theorem \ref{thm:main}, the expression of the density 
\begin{align}
f_{Z^{\sigmaoptk \eta}_t/\eta}(z) =  p^{\sigmaoptk  1}(t/\eta^2,z) + \int_0^{t/\eta^2} p^{\sigmaoptk  1} \left( \frac{t}{\eta^2}-v,z \right) \sum_{k=1}^{\infty} f_{\tau^{\sigmaoptk 1}}^{* k} \left( v \right)  dv \, . \label{eqn:pdf_Zeta}
\end{align}
It is clear that for large values of $\eta$ it is the first term in \eqref{eqn:pdf_Zeta} that is the dominant one. Thus, in this case the density is approximately the same as the absorbed Wiener process. Furthermore, since $\eta$ was assumed to be large the density of the absorbed Wiener process is approximately the same as the Wiener process without absorbing barriers. Hence, for large $\eta$ we have that 
\begin{align}
f_{Z^{\sigmaoptk \eta}_t/\eta}(z) \approx \frac{\eta}{ \sigma \sqrt{t}} \phi\left( z \frac{\eta}{ \sigma \sqrt{t} } \right) \, , \label{eqn:fnorm}
\end{align}
where $\phi$ denotes the standard normal density function.

In Figure \ref{fig:pdf} the density of $f_{Z^{\sigmaoptk \eta}_t/\eta}$, at $t=0.5$, as we let $\eta$ go from $4.0$ to $0.5$ is depicted. It is seen that when $\eta=4.0$ the distribution is quite close to the normal distribution. For $\eta=0.5$ the distribution on the other hand is quite close to the triangular distribution.
\begin{figure}
\centerline{
{\psfrag{x}[bc][bc]{$z$}
\psfrag{pdf}[bc][bc]{$\text{pdf}$}
\includegraphics[width=0.5\textwidth]{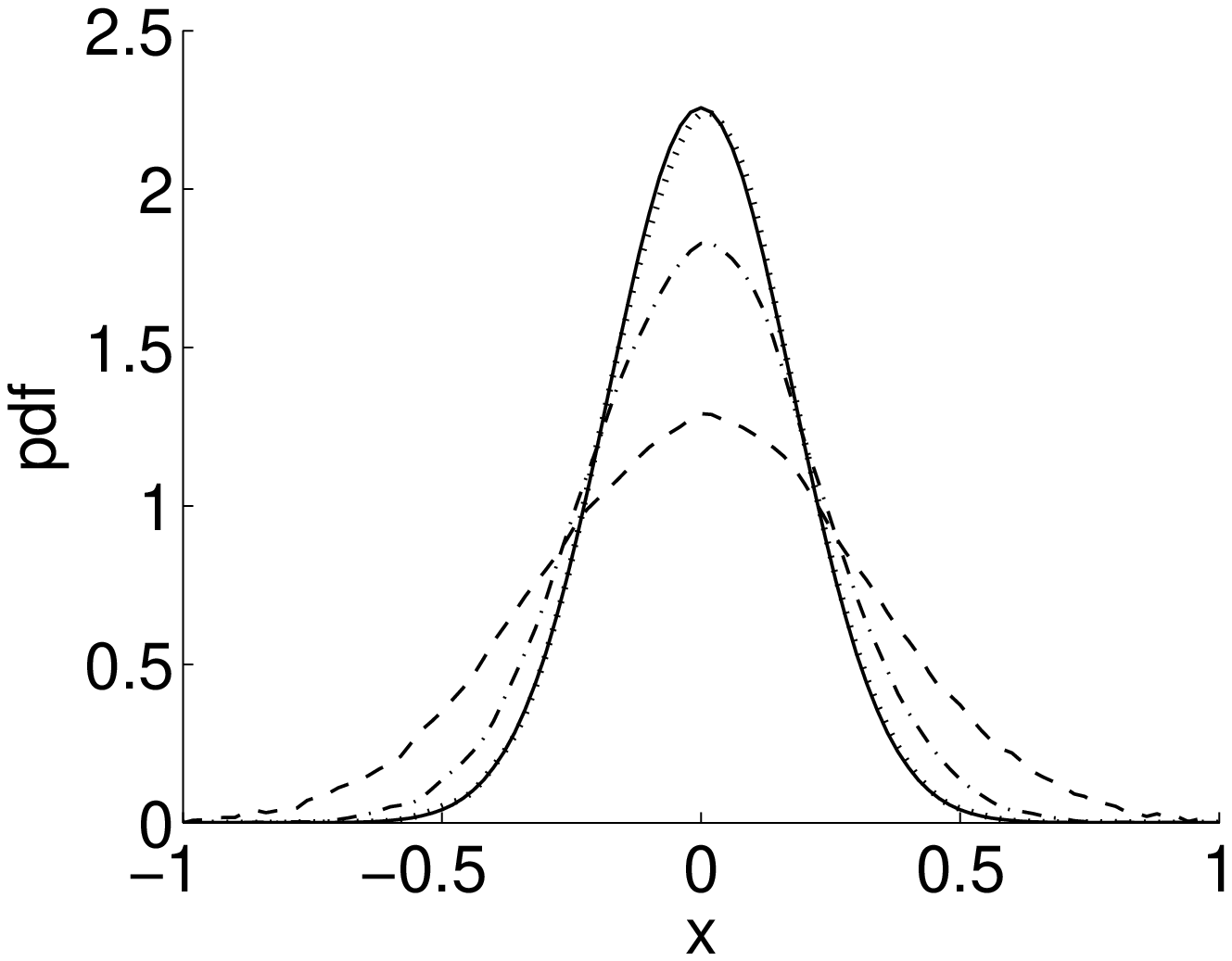}\includegraphics[width=0.5\textwidth]{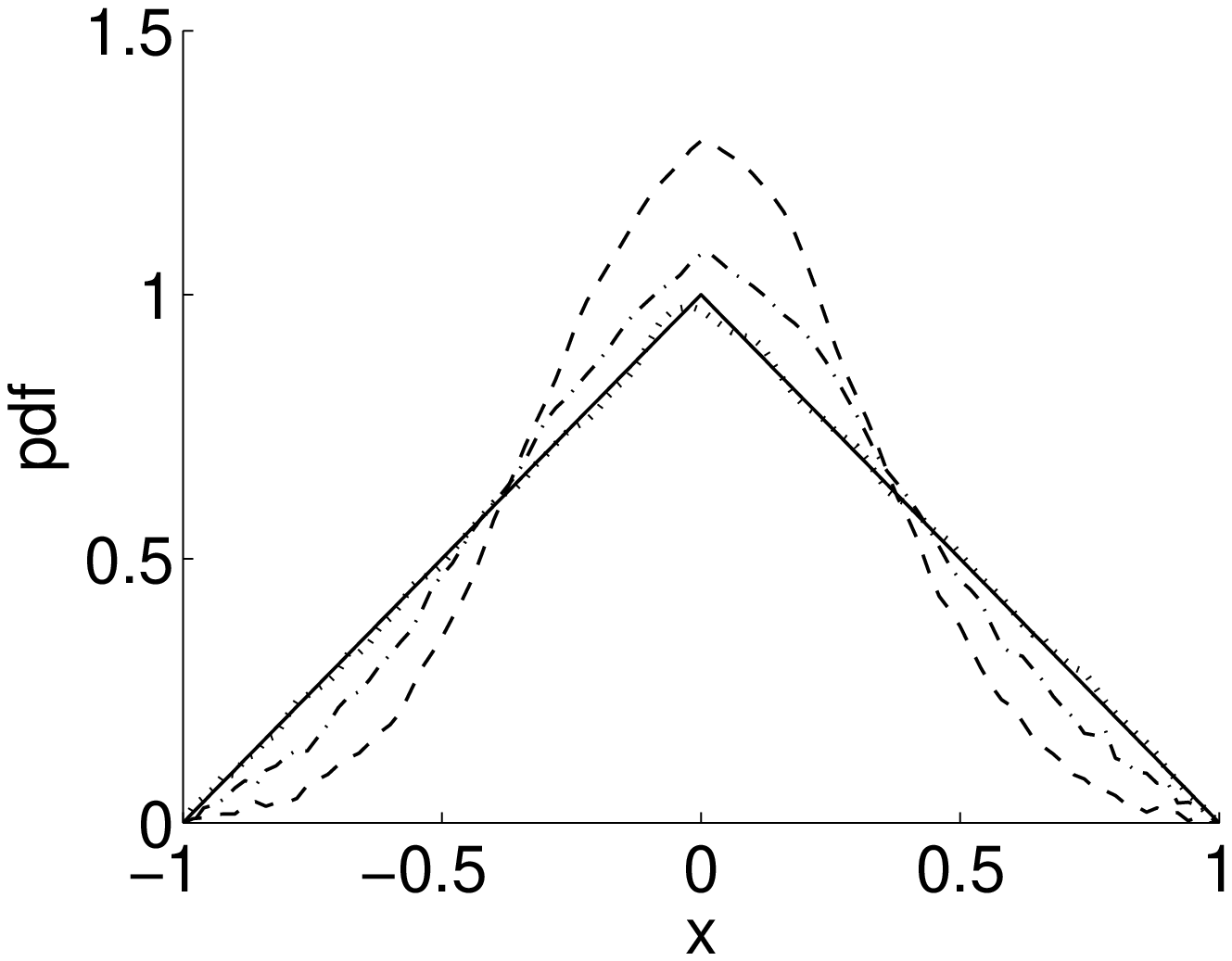}}}
\caption{Left (large values of $\eta$): kernel estimates of $f_{Z^{\sigmaoptk \eta}_t/\eta}(z)$ where $\eta=4.0$ (dotted line), $\eta=3.25$ (dash-dotted line) and $\eta=2.5$ (dashed line), and the Gaussian distribution (solid line). Right (small values of $\eta$): kernel estimates of $f_{Z^{\sigmaoptk \eta}_t/\eta}(z)$ where $\eta=2.5$ (dashed line), $\eta=2.0$ (dash-dotted line) and $\eta=0.5$ (dotted line), and the triangular distribution (solid line). }
\label{fig:pdf}
\end{figure}

To further illustrate the transition from the normal distribution to the triangular distribution we measured the distance in therms of the Wasserstein metric between the, from the Monte Carlo simulation, estimated distribution and these two distributions. The distance between two distributions, with distribution functions $F$ and $G$, in terms of the Wasserstein metric is defined by
\begin{align*}
d_W(F,G) = \int_{\mathbb{R}} |F(x)-G(x)| dx \, .
\end{align*}

In Figure \ref{fig:dist} the Wasserstein distance between the empirical distribution and the triangular distribution as well as the distance between the empirical distribution and the normal distribution \eqref{eqn:fnorm}, at $t=0.5$, as a function of $\eta$ is depicted. Note that in the case of the normal distribution \eqref{eqn:fnorm} not only the empirical distribution but also the normal distribution that we compare with is dependent of $\eta$. It is seen that for $\eta$ smaller than $1.25$ the empirical distribution is relatively close to the triangular distribution whereas for values over $2.25$ it is close to the normal distribution \eqref{eqn:fnorm}. For $\eta$ in the interval $(1.25,2.25)$ the distribution is probably better explained by a mixture of the two distributions. The small offset from zero for small values of the distance is due to the variance of the monte carlo simulation.
\begin{figure}
\centerline{
{\psfrag{eta}[bc][bc]{$\eta$}
\psfrag{dW}[bc][bc]{$d_W$}
\includegraphics[width=0.5\textwidth]{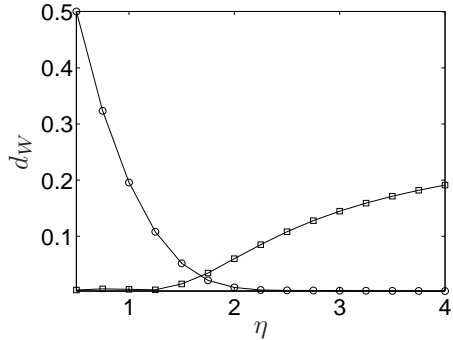}}}
\caption{Distance in terms of the Wasserstein metric between the triangular distribution and the empirical distribution (squares), and the normal distribution \eqref{eqn:fnorm} and the empirical distribution (circles).}
\label{fig:dist}
\end{figure}

\begin{figure}
\centerline{
{\psfrag{time}[bc][bc]{Time}
\psfrag{variance}[bc][bc]{Variance}
\includegraphics[width=0.5\textwidth]{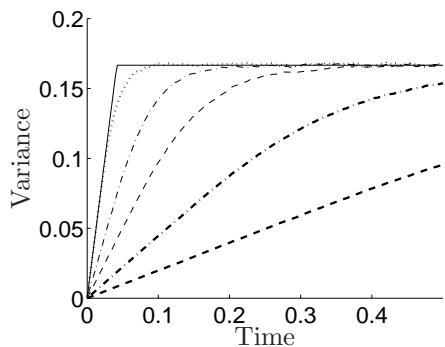}}}
\caption{The variance of $Z^{\sigmaoptk \eta}_t/\eta$ as a function of time where $\eta=0.50$ (dotted line), $\eta=0.75$ (thin dash-dotted line), $\eta=1.00$ (thin dashed line), $\eta=1.50$ (thick dash-dotted line) and $\eta=2.25$ (thick dashed line), together with the function $(t/0.5^2)\wedge(1/6)$ (solid line).}
\label{fig:var}
\end{figure}
From \eqref{eqn:pdf_Zeta} it is clear that it is possible to fix $\eta$ and instead of letting $\eta$ approach zero let $t$ approach infinity. To capture this we have plotted the variance of $Z^{\sigmaoptk \eta}_t/\eta$ as a function of $t$ for a couple of different values of $\eta$ (see Figure \ref{fig:var}). The constant $1/6$, that is the value of the variance of the triangularly distributed random variable, is also plotted in the figure. As expected it is seen that for low values of $\eta$ the limiting variance of $1/6$ is attained much faster than for higher values of $\eta$. From the argumentation above regarding high values of $\eta$ it is also clear that for low values of $t$ the distribution is approximately normal. Hence, the slope of the lines near zero is given by $1/\eta^2$, as is seen in the figure.


\begin{thebibliography}{5}
\providecommand{\natexlab}[1]{#1}
\providecommand{\url}[1]{\texttt{#1}}
\expandafter\ifx\csname urlstyle\endcsname\relax
  \providecommand{\doi}[1]{doi: #1}\else
  \providecommand{\doi}{doi: \begingroup \urlstyle{rm}\Url}\fi

\bibitem[Cox and Miller(1965)]{Cox_Miller:1965}
D.R. Cox and H.D. Miller.
\newblock \emph{The Theory of Stochastic Processes}.
\newblock Methuen and CO LTD, 1965.

\bibitem[Daley and Vere-Jones(1988)]{Daley_VereJones:1988}
D.~J. Daley and D.~Vere-Jones.
\newblock \emph{An introduction to the theory of point processes}.
\newblock Springer Series in Statistics. Springer-Verlag, New York, 1988.

\bibitem[Geiss and Geiss(2006)]{Geiss_Geiss:2006}
C.~Geiss and S.~Geiss.
\newblock On an approximation problem for stochastic integrals where random
  time nets do not help.
\newblock \emph{Stochastic Processes and their applications}, 116\penalty0
  (3):\penalty0 407--422, 2006.

\bibitem[Heyman and Sobel(1982)]{Heyman_Sobel:1982}
Daniel~P. Heyman and Matthew~J. Sobel.
\newblock \emph{{Stochastic models in operations research.}}, volume~I.
\newblock McGraw-Hill., 1982.

\bibitem[Milstein and Tretyakov(1999)]{Milstein_Tretyakov:1999}
G.~N. Milstein and M.~V. Tretyakov.
\newblock Simulation of a space-time bounded diffusion.
\newblock \emph{Ann. Appl. Probab.}, 9\penalty0 (3):\penalty0 732--779, 1999.

\end{thebibliography}
\end{document}